\documentclass[12pt,a4paper]{amsart}
\pdfsuppresswarningpagegroup=1 
\usepackage[totalwidth=15.75cm,totalheight=22.275cm]{geometry}
\usepackage[utf8]{inputenc}
\usepackage[font=small]{caption}
\usepackage{mathscinet}
\usepackage{amsmath}     
\usepackage{amssymb}     
\usepackage{amsfonts}     
\usepackage{amsthm}     
\usepackage[showonlyrefs]{mathtools}
\mathtoolsset{showonlyrefs}
\usepackage{hyperref}
\numberwithin{equation}{section}
\newtheorem{theorem}{Theorem}[section]
\newtheorem{lemma}[theorem]{Lemma}
\theoremstyle{definition}

\theoremstyle{remark}

\newtheorem*{ack}{Acknowledgment}
\newtheorem*{rem}{Remark}

\def\R{\mathbb{R}}
\def\C{\mathbb{C}}
\def\N{\mathbb{N}}

\def\LO{O}

\title[Differential equations and completely regular growth]{A question of Gol\cprime dberg and Ostrovskii
concerning linear differential equations with coefficients of completely regular growth}
\author{Walter Bergweiler}
\address{Mathematisches Seminar, Christian--Albrechts--Uni\-versi\-t\"at zu Kiel, 
24098 Kiel, Germany}
\email{bergweiler@math.uni-kiel.de}
\dedicatory{Dedicated to the memory of Anatoly A. Gol'dberg and Iossif V. Ostrovskii}
\makeatletter
\@namedef{subjclassname@2020}{%
  \textup{2020} Mathematics Subject Classification}
\makeatother

\subjclass[2020]{34M10, 34M05, 34M03, 30D15}
\keywords{Entire function, linear differential equation, 
completely regular growth}

\begin{document}

\begin{abstract}
We show that a linear differential equation whose coefficients are entire functions of
completely regular growth may have an entire solution of finite order which is not of
completely regular growth. This answers a question of Gol\cprime dberg and Ostrovskii.
\end{abstract}

\maketitle

\section{Introduction and results} \label{intro}
An entire function $f$ of order $\rho$ is said to be of 
\emph{completely regular growth} in the sense of Levin and Pfluger if there exists
a $2\pi$-periodic function $h\colon\R\to\R$ which does not vanish identically
 such  that
\begin{equation}\label{0a}
\log|f(re^{i\theta})|=h(\theta)r^\rho +o(r^\rho)
\end{equation}
as $r\to\infty$, for $re^{i\theta}$ outside a union of disks $\{z\colon |z-z_j|<r_j\}$ satisfying
\begin{equation}\label{0b}
\sum_{|z_j|\leq r} r_j=o(r)
\end{equation}
as $r\to\infty$. 
For a thorough treatment of entire functions of completely regular growth we refer to~\cite{Levin1980}.

The function $h$ is called the \emph{indicator} of~$f$.
The indicator is trigonometrically convex~\cite[Chapter~I, Lemma~6]{Levin1980}.
Conversely, for every trigonometrically convex function there exists an entire function 
of completely regular growth which has this function as its indicator \cite[Chapter~II, Theorem~3]{Levin1980}.

One may replace the term $r^\rho$ in~\eqref{0a} by $r^{\rho(r)}$ with a proximate order $\rho(r)$.
We refer to~\cite[Chapter~I, \S~12]{Levin1980}
for the definition of a proximate order.

If $n\in\N$ and  $a_0,\dots,a_{n-1}$ are polynomials, then every solution
$f$ of the linear differential equation 
\begin{equation}\label{0c}
f^{(n)}+a_{n-1}f^{(n-1)}+\ldots+a_1f'+a_0f=0
\end{equation}
is an entire function of completely regular growth. According to~\cite[Problem 16.2]{Havin1994},
this is due to Petrenko~\cite[Chapter IV]{Petrenko1984}.
It also follows from the work of Steinmetz
(\cite{Steinmetz1989}; see also~\cite[Section~2]{Steinmetz1993}).

If the coefficients $a_j$ are transcendental, then the general solution of~\eqref{0c}
is of infinite order~\cite[Satz~1]{Wittich1966}.
However, even if~\eqref{0c} has a solution of finite order,
this solution need not be of completely regular growth. In fact, 
Gol\cprime dberg~\cite[Problem 16.2]{Havin1994} showed that every entire function is 
a solution of some differential equation of the form~\eqref{0c} with entire
coefficients $a_0,\dots,a_{n-1}$.

Gol\cprime dberg and Ostrovskii (\cite[Problem 16.2]{Havin1994}, \cite[Question 5.5]{Gundersen2017})
asked whether a transcendental entire solution $f$ of~\eqref{0c}
of finite order must be of completely regular growth if the coefficients 
$a_0,\dots,a_{n-1}$ are of completely regular growth.
For some further discussion and results concerning this problem we refer
to~\cite{Bandura2020,Heittokangas2015,Wen2018}.

We will show that the answer to the question of Gol\cprime dberg and Ostrovskii is negative.
\begin{theorem}\label{theorem1}
Let $(z_k)$ be a sequence of distinct points in $\C\setminus\{0\}$ which tends to~$\infty$.
Suppose that the exponent of convergence $\sigma$ of $(z_k)$ is less than $1$
so that 
\begin{equation}\label{1a}
f(z)=\prod_{k=1}^\infty \left(1-\frac{z}{z_k}\right)
\end{equation}
defines an entire function $f$ of order $\sigma$.

Suppose also that there exists $C>0$ such that 
\begin{equation}\label{1b}
\left|\frac{f''(z_k)}{f'(z_k)^2}\right|\leq C
\end{equation}
for all $k\in\N$.

Then there exist entire functions $A_0$ and $B_0$
of order at most $\sigma$ such that 
\begin{equation}\label{1c}
f''+A_0f'+B_0 f=0.
\end{equation}
Moreover, given $\rho$ greater than $\sigma$ there exist entire functions $A$ and $B$ 
of order $\rho$ and of completely regular growth such that 
\begin{equation}\label{1d}
f''+Af'+B f=0.
\end{equation}
\end{theorem}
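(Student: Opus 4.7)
For the first part my plan is to construct $A_0$ by a classical interpolation formula. Evaluating \eqref{1c} at a simple zero $z_k$ of $f$ (where $f'(z_k)\neq 0$) forces $A_0(z_k)=-f''(z_k)/f'(z_k)$, so I would set
\[
 A_0(z):=-\sum_{k=1}^{\infty}\frac{f''(z_k)}{f'(z_k)^2}\cdot\frac{f(z)}{z-z_k}.
\]
The coefficients $f''(z_k)/f'(z_k)^2$ are uniformly bounded by $C$ thanks to \eqref{1b}, and $\sigma<1$ forces $\sum 1/|z_k|<\infty$; hence the series converges locally uniformly to an entire function, and at $z=z_m$ only the $k=m$ summand contributes, giving $A_0(z_m)=-f''(z_m)/f'(z_m)$. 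The standard upper bound $\log|f(z)|\le|z|^{\sigma+\varepsilon}$ together with the estimate $\sum_k 1/|z-z_k|=O(|z|^{1+\sigma+\varepsilon})$ on the complement of the disks $\{|w-z_k|<|z_k|^{-1}\}$ yields $|A_0(z)|\le\exp(|z|^{\sigma+\varepsilon'})$ on a sequence of circles $|z|=r_n\to\infty$ that miss these disks (such circles exist because the disks have radii summable to $\sum|z_k|^{-1}<\infty$). The maximum modulus principle then transfers this bound to all of $\C$, so $A_0$ is of order at most $\sigma$. Finally, $B_0:=-(f''+A_0f')/f$ is entire, because the numerator vanishes at each simple zero $z_k$ by the defining property of $A_0$, and a parallel argument using a Levin-type minimum-modulus estimate for the canonical product $f$ shows that $B_0$ is also of order at most $\sigma$.

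For the second part, I first observe that once $(A_0,B_0)$ is fixed, every pair $(A,B)$ of entire functions satisfying \eqref{1d} arises via
\[
 A=A_0+fH,\qquad B=B_0-f'H
\]
for a uniquely determined entire $H$. Indeed, subtracting \eqref{1c} from \eqref{1d} yields $(A-A_0)f'=-(B-B_0)f$, and since $f$ has only simple zeros (so $f,f'$ share no zeros), $H:=(A-A_0)/f=-(B-B_0)/f'$ is entire; conversely, a direct substitution together with \eqref{1c} shows that the displayed formulas always produce a solution of \eqref{1d}. The problem therefore reduces to finding a single entire $H$ of order $\rho$ and of completely regular growth such that $A_0+fH$ and $B_0-f'H$ inherit both properties.

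The idea is to choose $H$ with strictly positive indicator, which renders the lower-order perturbations $A_0$ and $B_0$ negligible. Fix any $c>0$; the constant function $h\equiv c$ is trigonometrically convex of every order, so by \cite[Chapter~II, Theorem~3]{Levin1980} there exists an entire function $H$ of completely regular growth of order $\rho$ with indicator identically $c$. Since $f$ and $f'$ have order $\sigma<\rho$ and their zero sets have exponent of convergence less than $1$, the upper bound $\log|f|,\log|f'|\le|z|^{\sigma+\varepsilon}$ and the classical minimum-modulus estimate $\log|f|,\log|f'|\ge-|z|^{\sigma+\varepsilon}$ hold outside a union of small disks around these zeros satisfying~\eqref{0b}. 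Combining with the CRG property of $H$ (and its own exceptional set) one obtains
\[
 \log|f(z)H(z)|=c|z|^\rho+o(|z|^\rho),\qquad \log|f'(z)H(z)|=c|z|^\rho+o(|z|^\rho)
\]
outside a union of disks satisfying~\eqref{0b}. Since $|A_0(z)|,|B_0(z)|\le\exp(|z|^{\sigma+\varepsilon})=\exp(o(|z|^\rho))$ are exponentially swamped by $|fH|$ and $|f'H|$ on this set, we get $A=fH(1+o(1))$ and $B=-f'H(1+o(1))$, so both $A$ and $B$ are entire of completely regular growth of order $\rho$ with indicator~$c$.

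The main obstacle is the first part, specifically the transition from the pointwise estimate $|A_0(z)|\le C|f(z)|\sum_k 1/|z-z_k|$ to a genuine order-$\sigma$ bound on $A_0$. Both the summability of $\sum 1/|z_k|$ and the abundance of circles $|z|=r_n\to\infty$ avoiding the small disks around the $z_k$ depend crucially on $\sigma<1$, and the maximum principle is then needed to pass from bounds on these good circles to bounds on the entire plane. Once that is granted, the second part is a relatively routine domination argument made possible by the strict positivity of the indicator of $H$.
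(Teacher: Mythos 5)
Your construction coincides with the paper's: you build $A_0=f\cdot\sum_k u_k/(z-z_k)$ with $u_k=-f''(z_k)/f'(z_k)^2$, define $B_0$ from the equation, and then perturb by $Hf$ and $-Hf'$ where $H$ has completely regular growth of order $\rho$ with positive constant indicator, so that the lower-order terms $A_0$, $B_0$ are absorbed outside a small exceptional set. The only divergence is in bounding the growth of $A_0$ and $B_0$: the paper invokes a Nevanlinna-theoretic lemma from Gol\cprime dberg--Ostrovskii giving $m(r,\sum_k u_k/(z-z_k))=o(1)$ and hence $T(r,g)\le T(r,f)+O(1)$, whereas you use pointwise estimates off small exceptional disks together with the maximum modulus principle (and a minimum-modulus bound for $B_0$); both routes are sound and reach the same conclusion.
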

To answer the question by 
Gol\cprime dberg and Ostrovskii it thus suffices to show that there exists an
entire function $f$ which satisfies the hypotheses of Theorem~\ref{theorem1}
but which is not of completely regular growth.

One way to check that condition~\eqref{1b} is satisfied for a given function $f$ is to establish 
a lower bound for $|f'|$ in some disk around $z_k$. Since $f''/(f')^2=-(1/f')'$ 
one can then obtain an upper bound for $|f''(z_k)/f'(z_k)^2|$ from Cauchy's
integral formula.

\begin{theorem}\label{theorem2}
Let $0<\rho<1$, let $(r_k)$ be an increasing sequence of positive numbers 
tending to infinity and let $(n_k)$ be a sequence of natural numbers 
satisfying $n_k\sim r_k^\rho$ as $k\to\infty$.

If $(r_k)$ tends to $\infty$ sufficiently fast, then 
\begin{equation}\label{1e}
f(z)=\prod_{k=1}^\infty \left(1-\left(\frac{z}{r_k}\right)^{n_k}\right)
\end{equation}
defines an entire function $f$ of order $\rho$ and lower order $0$
which satisfies the hypotheses of Theorem~\ref{theorem1}.
\end{theorem}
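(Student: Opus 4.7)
The plan is to construct $(r_k)$ inductively, at stage $k$ choosing $r_{k+1}$ large enough in terms of $r_1,\ldots,r_k$ and $n_1,\ldots,n_k$ to satisfy the finitely many estimates needed below. The starting point is the observation that
\begin{equation*}
1-(z/r_k)^{n_k}=\prod_{j=0}^{n_k-1}(1-z/z_{k,j}),\qquad z_{k,j}:=r_k e^{2\pi i j/n_k},
\end{equation*}
so $f$ is the genus-zero canonical product over its zero sequence. Since $n_k/r_k^s\sim r_k^{\rho-s}$ and we shall impose at least geometric growth, $\sum_k n_k/r_k^s$ converges precisely for $s>\rho$; the zeros thus have exponent of convergence $\rho<1$, and consequently $f$ is entire of order $\rho$.

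To drive the lower order down to $0$, I would evaluate $\log M$ at $R_k=r_{k+1}^{1/2}$, which for large $k$ lies well inside the gap $(r_k,r_{k+1})$. Applying $\log|1-w|\leq\log(1+|w|)$ factor-wise,
\begin{equation*}
\log M(R_k)\leq\sum_{l\leq k}\bigl[n_l\log(R_k/r_l)+\log 2\bigr]+\sum_{l>k}(R_k/r_l)^{n_l}.
\end{equation*}
The second sum is $O(r_{k+1}^{-n_{k+1}/2})=o(1)$ and the first is dominated by the $l=k$ term, of size at most $Cn_k\log R_k\leq C'r_k^{\rho}\log r_{k+1}$. Hence
\begin{equation*}
\frac{\log\log M(R_k)}{\log R_k}\leq\frac{\rho\log r_k+\log\log r_{k+1}+O(1)}{\tfrac12\log r_{k+1}},
\end{equation*}
which tends to $0$ once the induction arranges $\log r_{k+1}/\log r_k\to\infty$.

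For condition~\eqref{1b} I would follow the hint given after Theorem~\ref{theorem1}. Split $f=u\,g_k$ with $u(z)=1-(z/r_k)^{n_k}$ and $g_k(z)=\prod_{l\neq k}(1-(z/r_l)^{n_l})$, and work on $D_{k,j}:=\{z:|z-z_{k,j}|\leq \alpha r_k/n_k\}$ for a fixed small $\alpha>0$. Taylor expansion of $u$ at $z_{k,j}$ gives $|u'(z)|\geq(1-O(\alpha))n_k/r_k$ and $|u(z)|\leq 2\alpha$ on $D_{k,j}$. A term-by-term estimate, using that $(r_k)$ grows rapidly, shows
\begin{equation*}
\log|g_k(z)|=\log G_k+o(1),\qquad G_k:=\prod_{l<k}(r_k/r_l)^{n_l}\geq 1,
\end{equation*}
uniformly on $D_{k,j}$, and an analogous computation yields $|g_k'(z)/g_k(z)|=O(n_{k-1}/r_k)=o(n_k/r_k)$. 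Combining,
\begin{equation*}
|f'(z)|=|g_k(z)|\cdot\bigl|u'(z)+u(z)g_k'(z)/g_k(z)\bigr|\geq\tfrac14\,\frac{n_kG_k}{r_k}\quad\text{on }D_{k,j}.
\end{equation*}
Since $f''/(f')^2=-(1/f')'$, applying Cauchy's inequality to the holomorphic function $1/f'$ on $\partial D_{k,j}$ yields $|f''(z_{k,j})/f'(z_{k,j})^2|\leq 4/(\alpha G_k)\leq 4/\alpha$, establishing~\eqref{1b}.

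The technical heart of the argument is the inductive bookkeeping: every constraint arising above is of the form $r_{k+1}\geq F_k(r_1,\ldots,r_k,n_1,\ldots,n_k)$ for some explicit $F_k$, so can be enforced at stage $k$. No single estimate is difficult; the main obstacle is checking that the three lines of analysis (order, lower order, and~\eqref{1b}) can be run in parallel without the constraints conflicting.
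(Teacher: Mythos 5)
Your proposal is correct and follows essentially the same route as the paper: rapid inductive growth of $(r_k)$, localization to small disks about the zeros $r_k e^{2\pi i j/n_k}$, a lower bound for $|f'|$ there, and Cauchy's formula applied to $-(1/f')'=f''/(f')^2$. The only (minor) difference is that the paper obtains the lower bound for $|f'|$ on the boundary circle from asymptotics for $f$ and $zf'/f$, whereas you split $f=u\,g_k$ and estimate the two factors separately; both variants work.
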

Since the order and lower order of the function $f$ given in Theorem~\ref{theorem2}
are different, this function cannot be of completely regular growth.
As noted above, combined with Theorem~\ref{theorem1}
this solves the problem posed by Gol\cprime dberg and Ostrovskii.

\begin{ack}
I thank Alexandre Eremenko and the referee for helpful comments.
\end{ack}
\section{Proofs of Theorems~\ref{theorem1} and~\ref{theorem2}} \label{proof1}
A standard interpolation result says that given sequences $(z_k)$ and
$(w_k)$ in $\C$, with $z_k\to\infty$, there
exists an entire function $h$ satisfying $h(z_k)=w_k$.
This result was also used by Gol\cprime dberg~\cite{Havin1994} in his proof that every
entire function $f$ satisfies an equation of the form~\eqref{0c} with
entire coefficients $a_0,\dots,a_{n-1}$.

The standard proof of  this interpolation result is as follows. 
Let $f$ be an entire function with simple zeros at the points $z_k$ and
let $g$ be a meromorphic function having simple poles at the $z_k$ with principal
part $u_k/(z-z_k)$, where $u_k=w_k/f'(z_k)$. Then $h:=fg$ has the required 
property. 

We will control the growth of $g$ and hence $h$
using the following lemma~\cite[Chapter~5, Theorem 6.1]{Goldberg2008}.
Here and in the following we use the standard terminology of Nevanlinna theory
as given in~\cite{Goldberg2008}.

\begin{lemma}\label{lemma1}
Let $(z_k)$ and $(u_k)$ be sequences in $\C\setminus\{0\}$, with $z_k\to\infty$ and
\begin{equation}\label{3x}
\sum_{k=1}^\infty \left|\frac{u_k}{z_k}\right|<\infty.
\end{equation}
Then 
\begin{equation}\label{3a}
g(z)=\sum_{k=1}^\infty \frac{u_k}{z-z_k}
\end{equation}
defines a meromorphic function~$g$
which satisfies $m(r,g)=o(1)$ as $r\to\infty$.
\end{lemma}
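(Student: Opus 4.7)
The plan is first to verify that \eqref{3a} defines a meromorphic function on $\C$, and then to bound $m(r,g)$ by truncating the series and analysing the remaining tail scale-by-scale.

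For convergence, observe that on any bounded set $B\subset\C$, once $|z_k|>2\sup_B|z|$ one has $|z-z_k|\geq|z_k|/2$ on $B$, so $|u_k/(z-z_k)|\leq 2|u_k/z_k|$. Hypothesis~\eqref{3x} then gives uniform convergence on compact subsets of $\C\setminus\{z_k\}$, so $g$ is meromorphic with simple poles at the $z_k$. To bound $m(r,g)$, fix $\varepsilon\in(0,1/4)$ and choose $K$ so that $\sum_{k\geq K}|u_k/z_k|<\varepsilon$. Write $g=g_K+g^K$ with $g_K(z)=\sum_{k<K}u_k/(z-z_k)$; the rational function $g_K$ satisfies $g_K(z)=O(1/z)$ at infinity, so $m(r,g_K)=0$ for all sufficiently large~$r$.

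Partition the tail poles into \emph{far} ones (with $|z_k|\leq r/2$ or $|z_k|\geq 2r$) and \emph{medium} ones $J_r=\{k\geq K:r/2<|z_k|<2r\}$. Using $|z-z_k|\geq\tfrac12\max(r,|z_k|)$ for the far poles one obtains $|u_k/(z-z_k)|\leq 2|u_k/z_k|$ on $|z|=r$, so the far part of $g^K$ is uniformly bounded there by $2\varepsilon<1$ and therefore contributes $0$ to $m(r,\cdot)$. For the medium part $g^K_{\mathrm{med}}(z)=\sum_{k\in J_r}u_k/(z-z_k)$, the inequality $|u_k|\leq 2r|u_k/z_k|$ gives total residue mass $\sum_{k\in J_r}|u_k|\leq 2r\varepsilon$. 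I would aim for the weak-type estimate
\[
\lambda\bigl\{\theta\in[0,2\pi):|g^K_{\mathrm{med}}(re^{i\theta})|\geq t\bigr\}\leq C\varepsilon/t\qquad(t\geq 1),
\]
which when inserted in $m(r,g^K_{\mathrm{med}})=\int_1^\infty \lambda(E_t)\,dt/(2\pi t)$ yields the desired $m(r,g^K_{\mathrm{med}})=O(\varepsilon)$, and since $\varepsilon$ was arbitrary this gives $m(r,g)=o(1)$.

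The hardest step is the weak-type bound. The elementary observation that $|g^K_{\mathrm{med}}(z)|\leq 2r\varepsilon/d$ whenever $z$ lies at distance at least $d$ from every medium pole places $E_t$ inside a union of disks $D(z_k,2r\varepsilon/t)$; however, summing arc lengths on $|z|=r$ produces a factor equal to the counting number $|J_r|$, which can grow with~$r$. Overcoming this requires a Cartan-type refinement in which each level set is associated with only a subfamily of dominating poles, enabling the use of the smaller per-pole radius $|u_k|/t$; the angular widths on $|z|=r$ then telescope to $\sum_{k\in J_r}|u_k|/(rt)\leq 2\varepsilon/t$, as needed. This selection of dominating poles is the technical heart of the proof.
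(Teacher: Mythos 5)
The paper offers no proof of this lemma; it is imported verbatim from Gol\cprime dberg--Ostrovskii \cite[Chapter~5, Theorem~6.1]{Goldberg2008}, so any proof you give is necessarily your own. Your framework --- local uniform convergence, truncation at $K$, the far/medium splitting on $|z|=r$, and the layer-cake integration of a weak-type estimate for the medium part --- is reasonable, and the weak-type inequality you are aiming at is in fact true. The problem is that the route you propose for it, which you correctly identify as the technical heart, cannot succeed. Your covering argument starts from the pointwise bound $|g^K_{\mathrm{med}}(z)|\le\sum_{k\in J_r}|u_k|/|z-z_k|$, so it only ever sees the absolute-value majorant, and the weak-type $(1,1)$ inequality is \emph{false} for that majorant. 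Take $N$ poles $z_j=re^{2\pi ij/N}$ on the circle $|z|=r$ with equal weights $|u_j|=M/N$ (this is a legitimate medium configuration with $\sum_j|u_j/z_j|=M/r$). At a point of $|z|=r$ whose arc-distance from the nearest pole is comparable to $2\pi r/N$, the harmonic-series tail gives $\sum_j|u_j|/|z-z_j|\gtrsim M\log N/(2\pi r)$, so the level set of the majorant at height $t\asymp M\log N/r$ has angular measure close to $2\pi$, whereas the claimed bound $CM/(rt)$ is of order $1/\log N$. Since $|J_r|$ need not stay bounded as $r\to\infty$, this is fatal: no selection of ``dominating poles'' with per-pole radii $\asymp|u_k|/t$ can cover these level sets, because at the points in question the distance to the nearest pole exceeds $|u_k|/t$ by a factor $\log N$. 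The telescoping you hope for \emph{is} the (false) weak-type bound for $\sum|u_k|/|z-z_k|$.

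What rescues the actual statement is cancellation in the Cauchy kernel: for the configuration above, $g(z)=Mz^{N-1}/(z^N-r^N)$, whose modulus on $|z|=r$ equals $M/(2r|\sin(N\theta/2)|)$ and is therefore of order $M/r$ at generic points --- the $\log N$ disappears. A correct proof of your weak-type estimate must exploit this phase structure, for instance by decomposing $1/(re^{i\theta}-z_k)$ into Poisson and conjugate-Poisson parts and invoking Kolmogorov's weak-$(1,1)$ inequality for the Hilbert transform (or Boole's identity for positive measures); alternatively one can simply follow the argument in \cite{Goldberg2008}. A minor additional point: when you recombine $g_K$, the far part and the medium part, the inequality $\log^+(a+b)\le\log^+a+\log^+b+\log 2$ would leave a stray $\log 2$ incompatible with an $o(1)$ conclusion; you need to note that the first two pieces are at most $2\varepsilon<1$ on $|z|=r$, whence $\log^+(x+2\varepsilon)\le\log^+x+2\varepsilon$. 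That is easily repaired, but the weak-type step as described is a genuine gap.
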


\begin{proof}[Proof of Theorem~\ref{theorem1}]
Put $u_k=-f''(z_k)/f'(z_k)^2$.
Since the exponent of convergence of $(z_k)$ is less than $1$
we deduce from~\eqref{1b} that~\eqref{3x} holds.
Let $g$ be defined by~\eqref{3a} and put $A_0=fg$. Then
\begin{equation}\label{3f}
A_0(z_k)=u_k f'(z_k)=-\frac{f''(z_k)}{f'(z_k)}
\end{equation}
for all $k\in\N$.
It follows that $f''(z_k)+A_0(z_k)f'(z_k)=0$ for all $k\in\N$ and hence that 
the function $B_0$ defined by 
\begin{equation}\label{3g}
B_0=-\frac{f''+A_0f'}{f}
\end{equation}
is entire. Thus~\eqref{1c} is satisfied.

Moreover, it follows from Lemma~\ref{lemma1} that 
\begin{equation}\label{3h}
T(r,g)=N(r,g)+o(1)=N\!\left(r,\frac{1}{f}\right)+o(1)\leq T(r,f)+\LO(1) .
\end{equation}
Thus the order of $g$ does not exceed that of~$f$, which is equal to~$\sigma$.
It now follows from the definition of $A_0$ and $B_0$ that their
orders are also not larger than~$\sigma$.

To prove the existence of entire functions $A$ and $B$ with the properties
stated, let $H$ be any entire function which has completely regular 
growth of order $\rho$. Since $\rho>\sigma$,
it follows from~\cite[Chapter III, \S~4]{Levin1980} that $Hf$ and $Hf'$ are 
of completely regular growth, with the same indicator as~$H$.
We may choose $H$ such that the indicator of $H$ is positive.
(For example, we may choose $H$ such that the indicator $h$ of $H$ satisfies
$h(\theta)\equiv c$ for some $c>0$, noting that the function $h$ defined this
way is trigonometrically convex.)
It then follows that $A:=A_0+Hf$ and $B:=B_0-Hf'$ are also of completely regular 
growth, again with the same indicator as~$H$.
Finally, \eqref{1c} yields that $f$, $A$ and $B$ satisfy~\eqref{1d}.
\end{proof}
\begin{rem}
If the indicator of $H$ is not positive, then the functions $A$ and $B$ defined in the
above proof need not be of completely regular growth.
However, 
the results in~\cite{Goldberg1981,Goldberg1982,Goldberg1983} imply in particular that there 
exists $c\in\C\setminus\{0\}$ such that $A:=A_0+cHf$ and $B:=B_0-cHf'$ have completely regular growth.
And~\eqref{1d} is also satisfied for these functions.
\end{rem}

\begin{proof}[Proof of Theorem~\ref{theorem2}]
It is not difficult to see that if $(r_k)$ tends to $\infty$ sufficiently
fast, then~\eqref{1e} defines an entire function $f$ of order $\rho$ and lower
order~$0$. Moreover,  letting $(r_k)$ tend to $\infty$ sufficiently
fast we can achieve that if $z\in\C$ with $|z|\sim r_k$, then
\begin{equation}\label{2a}
f(z)\sim  \prod_{j=1}^{k}  \left(1-\left(\frac{z}{r_j}\right)^{n_j}\right)
\sim \left(1-\left(\frac{z}{r_k}\right)^{n_k}\right)
(-1)^{k-1} \prod_{j=1}^{k-1}  \left(\frac{z}{r_j}\right)^{n_j}
\end{equation}
and
\begin{equation}\label{2c}
\frac{zf'(z)}{f(z)}
=\sum_{j=1}^\infty n_j \frac{\left(\frac{z}{r_j}\right)^{n_j}}{\left(\frac{z}{r_j}\right)^{n_j}-1}
=\sum_{j=1}^{k-1} n_j  + n_k\frac{\left(\frac{z}{r_k}\right)^{n_k}}{\left(\frac{z}{r_k}\right)^{n_k}-1}+ o(1)
\end{equation}
as $k\to\infty$.

A zero $\xi$ of $f$ has the form $\xi=\omega_k r_k$ where $\omega_k^{n_k}=1$. For $z=\omega_k r_k(1+\zeta/n_k)$ with
$|\zeta|\leq 1$ we have $(z/r_k)^{n_k}\to e^\zeta$ as $k\to\infty$.
The disk $\{\zeta\colon |\zeta|\leq 1\}$ corresponds to 
the disk $D_\xi:=\{z\colon |z-\xi|\leq r_k/n_k\}$.
Since we may assume that $\sum_{j=1}^{k-1}n_j=o(n_k)$
it follows from~\eqref{2c} that 
for large $k$ the disk $D_\xi$ contains no zero of~$f'$.

We deduce from~\eqref{2a} and~\eqref{2c} that if $z\in\partial D_\xi$, then
\begin{equation}\label{2e}
|f'(z)|= \frac{1}{|z|}\cdot |f(z)| \cdot \left|\frac{zf'(z)}{f(z)}\right|
\sim 
\frac{1}{r_k}\cdot \prod_{j=1}^{k-1} \left(\frac{r_k}{r_j}\right)^{n_j}  \cdot n_k\, |e^\zeta|.
\end{equation}
Hence 
\begin{equation}\label{2f}
\left|\frac{f''(\xi)}{f'(\xi)^2}\right|
=\left| \frac{1}{2\pi i}\int_{\partial D_\xi} \frac{dz}{f'(z)(z-\xi)^2}\right|
\leq 
\frac{n_k}{r_k} \max_{z\in \partial D_\xi}\frac{1}{|f'(z)|}
\leq 
(1+o(1)) e \prod_{j=1}^{k-1} \left(\frac{r_j}{r_k}\right)^{n_j}=o(1)
\end{equation}
as $k\to\infty$.
Thus $f$ satisfies the hypothesis of Theorem~\ref{theorem1}.
\end{proof}

\end{document}